\documentclass{amsart}

\usepackage{amsmath,amsthm, amssymb}
\usepackage[all]{xy}
\usepackage{enumerate}
\usepackage{wasysym}
\usepackage{graphicx,color}


\newcommand{\R}{\mathbb R}

\renewcommand{\d}{\mathrm d}
\newcommand{\g}{\mathfrak{g}}


\newcommand{\Id}{I}

\newcommand{\ixi}{i_\xi}

\newcommand{\td}{\mathrm d}

\newcommand{\tprod}{\cup}


\newtheorem{theorem}{Theorem}[section]
\newtheorem{lemma}[theorem]{Lemma}

\theoremstyle{definition}

\theoremstyle{remark}

\numberwithin{equation}{section}
\title{Sasakian nilmanifolds}

\author[B. Cappelletti-Montano]{Beniamino Cappelletti-Montano}
 \address{Dipartimento di Matematica e Informatica, Universit\`a degli Studi di
 Cagliari, Via Ospedale 72, 09124 Cagliari, Italy}
 \email{b.cappellettimontano@gmail.com}

\author[A. De Nicola]{Antonio De Nicola}
 \address{CMUC, Department of Mathematics, University of Coimbra, 3001-501 Coimbra, Portugal}
 \email{antondenicola@gmail.com}

\author[J. C. Marrero]{Juan Carlos Marrero}
 \address{Unidad Asociada ULL-CSIC ``Geometr{\'\i}a Diferencial y Mec\'anica Geo\-m\'e\-tri\-ca''
Departamento de Matem\'aticas, Estad{\'\i}stica e Investigaci\'on Operativa, Facultad de Ciencias, Universidad de La Laguna, La Laguna, Tenerife, Spain}
 \email{jcmarrer@ull.edu.es}

\author[I. Yudin]{Ivan Yudin}
 \address{CMUC, Department of Mathematics, University of Coimbra, 3001-501 Coimbra, Portugal}
 \email{yudin@mat.uc.pt}

\subjclass[2000]{Primary 53C25, 53D35 }

\thanks{Research partially supported by CMUC, funded by the European program
COMPETE/FEDER, by FCT (Portugal) grants PEst-C/MAT/UI0324/2011 (A.D.N. and I.Y.), by MICINN (Spain) grants
MTM2011-15725-E, MTM2012-34478 (A.D.N. and J.C.M.), the project of the Canary Government ProdID20100210 (J.C.M.), and by Prin 2010/11 -- Variet\`{a} reali e complesse: geometria, topologia e analisi armonica –- Italy (B.C.M.). \\ \medskip B.C.M., A.D.N. and I.Y. thank University of La Laguna for the hospitality during their staying.}


\begin{document}
\maketitle

\begin{abstract}
We prove that a compact nilmanifold admits a Sasakian structure if and only if it is a compact
quotient of the generalized Heisenberg group of odd dimension by a co-compact discrete subgroup.
\end{abstract}

\section{Introduction}

It is well known that the existence of a K\"ahler structure on a compact manifold $M$ of even dimension implies strong topological consequences
on $M$. In particular, any compact K\"ahler manifold satisfies the Hard Lefschetz property and is formal (see, for instance, \cite{DeGrMoSu,We}).

Using the second of these properties, a nice result which completely characterizes K\"ahler structures on compact nilmanifolds was found in  \cite{hasegawa}. The same result was obtained independently in  \cite{bensongordon}  using the Hard Lefschetz property. Namely,

\medskip

{\it A compact nilmanifold of even dimension admits a K\"ahler structure if and only it is diffeomorphic to a torus}.

\medskip

On the other hand, it is well known that the odd dimensional counterparts of K\"ahler manifolds are co-K\"ahler and Sasakian manifolds
(see \cite{blair2010,boyergalicki2008}). In some references (see \cite{blair2010,ChLeMa}), it is used the terminology cosymplectic manifolds for co-K\"ahler manifolds. However, in this note, we will use the last terminology which makes clear the close relation with K\"ahler manifolds. In fact, the term co-K\"ahler was used recently by Li \cite{Li} (see also \cite{BaLuOp,BaOp,survey}).
We remark that apart from the mathematical interest, co-K\"ahler and Sasakian manifolds are Poisson and contact manifolds, respectively, and that these last manifolds play an important role in some physical theories, particularly in time-dependent Mechanics (see \cite{AbMa,Arnold,CaLeMaMa,LeMaMa,LeMaMar}). Moreover, Sasakian manifolds have  recently attracted the interest of theoretical physicists, due to their role in the AdS/CFT duality that establishes a remarkable correspondence between gravity theories and gauge theories (see e.g. \cite{gabella2014, gauntlett2004, maldacena1998, martelli2006, martelli2008}). In addition, some interesting results on universal models for embeddings of compact Sasakian manifolds and on the global structure of these manifolds have been obtained recently (see \cite{OrVe1,OrVe2}; see also \cite{BoGaOr}).

Any compact co-K\"ahler manifold is formal \cite{ChLeMa}. So, using that a formal compact nilmanifold is diffeomorphic to a torus \cite{hasegawa}, we directly
deduce that

\medskip

{\it A compact nilmanifold of odd dimension admits a co-K\"ahler structure if and only if it is diffeomorphic to a torus}.

\medskip

This result is just the version for co-K\"ahler manifolds of the previous property for K\"ahler manifolds.

So, a natural question arise: what happens in the Sasakian setting with these results?

The aim of this paper is to give a complete answer to the previous question.

We remark that, very recently, a Hard Lefschetz theorem for Sasakian manifolds has been proved in \cite{CaNiYu}. However, so far, it is not clear if this result could be used in order to describe the compact Sasakian nilmanifolds.

On the other hand, differently from the K\"ahler case, compact Sasakian manifolds are not generally formal. Anyway, some interesting results have been obtained
very recently in this direction \cite{tievsky}. The geometric tool used in \cite{tievsky} is the basic cohomology with respect to the foliation
on the compact Sasakian manifold  $M$ which is generated by its Reeb vector field. In fact, in \cite{tievsky} the author proved that the real
homotopy type of a compact Sasakian manifold is a formal consequence of its basic cohomology and, in addition, its basic K\"ahler class.

Using this fact and some results in \cite{hasegawa} on minimal models of compact nilmanifolds, we give an answer to the previous question. More precisely, we prove the following result:

\begin{theorem}\label{thm:main}
A compact nilmanifold of dimension $2m+1$ admits a Sasakian structure if and only if it is a compact quotient of the generalized Heisenberg group $H(1, m)$ by a co-compact discrete subgroup $\Gamma$.
\end{theorem}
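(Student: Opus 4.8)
The plan is to prove the two implications separately, the implication ``$\Leftarrow$'' being elementary. For it, I would produce a left-invariant Sasakian structure on $H(1,m)$ and note that it descends to every compact quotient. Choose a left-invariant coframe $\{dx_i,dy_i\ (1\le i\le m),\ \eta\}$ of $H(1,m)$ with $dx_i,dy_i$ closed and $d\eta=\sum_i dx_i\wedge dy_i$; then $\eta$ is a left-invariant contact form with Reeb field $\xi$, and $d\eta$ a left-invariant transverse symplectic form. Endowing $\ker\eta$ with the obvious left-invariant compatible almost complex structure $\phi$ and forming the associated left-invariant metric $g$, the normality condition $N_\phi+2\,d\eta\otimes\xi=0$ reduces to a routine computation (the tensor $\phi$ being constant in the left-invariant frame and the Chevalley--Eilenberg differential of $\mathfrak h(1,m)$ being quadratic). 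Hence $(\phi,\xi,\eta,g)$ is Sasakian, and, being left-invariant, it induces a Sasakian structure on $\Gamma\backslash H(1,m)$ for every co-compact discrete subgroup $\Gamma$.

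For ``$\Rightarrow$'', let $N=\Gamma\backslash G$ be a compact Sasakian nilmanifold of dimension $2m+1$, $G$ simply connected nilpotent with Lie algebra $\g$. I would first recall that, by Nomizu's theorem and the description of minimal models of nilmanifolds used in \cite{hasegawa}, the Chevalley--Eilenberg algebra $(\Lambda\g^*,d)$ --- whose generators all lie in degree $1$ --- is the minimal model of $N$. On the other hand, writing $\mathcal F$ for the Reeb foliation, $\omega_B$ for the transverse K\"ahler form and $H_B^*$ for the basic cohomology, Tievsky's theorem \cite{tievsky} asserts that $A:=(H_B^*\otimes\Lambda(x),d_A)$, with $\deg x=1$, $d_Ax=[\omega_B]$, $d_A|_{H_B^*}=0$, is a model of $N$; consequently $(\Lambda\g^*,d)$ is the minimal model of $A$ as well. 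I would also use the standard facts that $H_B^*$ is finite dimensional and $H_B^{2m}\cong\R$ with $[\omega_B]^m\neq0$.

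The crux --- and the step I expect to be the main obstacle --- is to show that these constraints force $H_B^*\cong\Lambda\R^{2m}$ with $[\omega_B]$ nondegenerate. Let $\mathcal M_B\xrightarrow{\ \simeq\ }(H_B^*,0)$ be a minimal model of the algebra $H_B^*$, and form $\mathcal M_B\otimes\Lambda(x)$ with $dx$ a closed representative of $[\omega_B]$. Comparing the Wang exact sequences of $\mathcal M_B\otimes\Lambda(x)$ and of $A$ --- both governed by multiplication by $[\omega_B]$ --- via the five lemma shows that $\mathcal M_B\otimes\Lambda(x)\to A$ is a quasi-isomorphism, so $(\Lambda\g^*,d)$ is obtained from $\mathcal M_B\otimes\Lambda(x)$ by minimalization. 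Since $\mathcal M_B$ is already minimal, the only non-minimality of $\mathcal M_B\otimes\Lambda(x)$ comes from $dx=[\omega_B]\in(\mathcal M_B)^2$: minimalization can thus cancel at most one pair $(x,v)$ with $\deg v=2$ and deletes no generator of $\mathcal M_B$ of degree $\geq3$. As $(\Lambda\g^*,d)$ is concentrated in degree $1$, $\mathcal M_B$ therefore has no generator of degree $\geq3$ and at most one of degree $2$; but a free graded-commutative algebra on finitely many degree-$1$ generators and a single degree-$2$ generator is infinite dimensional, contradicting $\dim H_B^*<\infty$. Hence $\mathcal M_B$ is concentrated in degree $1$, i.e.\ $\mathcal M_B=(\Lambda\mathfrak h^*,d_{\mathfrak h})$ for some nilpotent Lie algebra $\mathfrak h$; then $\mathcal M_B\otimes\Lambda(x)$ is itself minimal, whence $(\Lambda\g^*,d)\cong\mathcal M_B\otimes\Lambda(x)$ and $\dim\mathfrak h=2m$. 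Finally, $\mathcal M_B$ being a minimal model of $(H_B^*,0)=(H^*(\mathfrak h),0)$, the Chevalley--Eilenberg complex of $\mathfrak h$ is formal, which forces $\mathfrak h$ abelian (this is the algebraic content of \cite{hasegawa}; alternatively, use that $H_B^*$ satisfies Hard Lefschetz and invoke \cite{bensongordon}). Thus $H_B^*\cong\Lambda\R^{2m}$.

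With $\mathfrak h$ abelian, $(\Lambda\g^*,d)\cong\mathcal M_B\otimes\Lambda(x)$ is the Chevalley--Eilenberg algebra of the central extension $\R^{2m}\oplus_\omega\R$, where $\omega\in\Lambda^2\R^{2m}$ represents $[\omega_B]$; the condition $[\omega_B]^m\neq0$ in $H_B^{2m}\cong\R$ makes $\omega$ nondegenerate, and since all symplectic forms on $\R^{2m}$ are linearly equivalent, $\R^{2m}\oplus_\omega\R\cong\mathfrak h(1,m)$. By uniqueness of minimal models, $(\Lambda\g^*,d)\cong(\Lambda\mathfrak h(1,m)^*,d)$ as differential graded algebras; restricting this isomorphism to degree $1$ (the space of indecomposables) gives a Lie algebra isomorphism $\g\cong\mathfrak h(1,m)$. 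Hence $G\cong H(1,m)$, and $N=\Gamma\backslash G$ is a compact quotient of $H(1,m)$ by the co-compact discrete subgroup $\Gamma$, as claimed.
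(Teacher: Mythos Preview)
Your overall strategy---passing through the minimal model $\mathcal M_B$ of $(H_B^*,0)$, comparing $\mathcal M_B\otimes\Lambda(x)$ with $(\Lambda\g^*,d)$, and then invoking formality to force $\mathfrak h$ abelian---is a genuinely different and attractive route from the paper's. The paper never introduces $\mathcal M_B$; it works directly with the quasi-isomorphism $f\colon(\Lambda\g^*,d)\to(H_B^*\otimes\R[y]/(y^2),\td)$, writes $f(\alpha_i)=\beta_i+a_iy$, and uses a pigeonhole argument on $[(d\eta)^m]_B\neq 0$ together with the parity of $b_1(N)$ to show $\dim H^1(\g)=2m$ and produce an algebraic contact form on $\g$.

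There is, however, a real gap in your argument where you exclude a degree-$2$ generator of $\mathcal M_B$. You assert that ``a free graded-commutative algebra on finitely many degree-$1$ generators and a single degree-$2$ generator is infinite dimensional, contradicting $\dim H_B^*<\infty$''. But $H_B^*$ is $H^*(\mathcal M_B)$, not $\mathcal M_B$; a minimal CDGA with an even-degree generator may have finite-dimensional cohomology (witness the minimal model of $S^2$). So the contradiction does not follow as written. The gap can be closed: if $\mathcal M_B$ has a degree-$2$ generator $v$, your own minimalization step forces $dx=cv+\omega_2$ with $c\neq 0$ and $\omega_2\in\Lambda^2 V_B^1$; then $d^2x=0$ gives $dv=-c^{-1}d\omega_2$, so $dv$ is \emph{exact} in $\Lambda V_B^1$. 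Replacing $v$ by $v'=v+c^{-1}\omega_2$ yields a minimal generating set with $dv'=0$, whence $\mathcal M_B\cong(\Lambda\mathfrak h^*,d_{\mathrm{CE}})\otimes(\R[v'],0)$ and $H_B^*\cong H^*(\mathfrak h)\otimes\R[v']$, which \emph{is} infinite-dimensional. With this correction the rest of your plan goes through.
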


This is the main result of the paper.

We remark  that the generalized Heisenberg group $H(1, m)$ may be described as the group of real matrices
of the form
\[
\left(
\begin{array}{rcl}
1 & Q &    t \\
0 & I_m  & P \\
0  & 0  &   1
\end{array}
\right)
\]
with $Q = (q^1, \dots, q^m) \in \R^m$, $P^t = (p_1,\dots, p_m) \in \R^m$ and $t \in \R$. Its Lie algebra ${\frak h}(1, m)$ is isomorphic to a central extension of the abelian Lie algebra
of dimension $2m$ by a non-degenerate $2$-cocycle on it. Thus, one may choose a basis of ${\frak h}(1, m)$ in such a way
that the corresponding structure constants are rational numbers and, therefore, using a result in \cite{malcev}, we conclude that
$H(1, m)$ admits co-compact discrete subgroups (note that if $\Gamma(1, m)$ is the subgroup of matrices of $H(1, m)$ with
integer entries then $\Gamma(1, m)$ is a co-compact discrete subgroup).

On the other hand, our Theorem \ref{thm:main} extends some existing results in the literature. In particular, as a corollary
of Theorem 3.9 in \cite{afv2009}, one may deduce that a compact nilmanifold $G/\Gamma$ of dimension $2m+1$ admits a
Sasakian structure induced by a left-invariant Sasakian structure on $G$ if and only $G$ is isomorphic to $H(1, m)$.  We remark that
Theorem \ref{thm:main} takes care of the non-left-invariant Sasakian structures on nilmanifolds.

As we noted above, we do not know if the Hard Lefschetz theorem for Sasakian manifolds can be used for the proof of Theorem \ref{thm:main}. In fact, it is an open problem if there exist non-Heisenberg (and thus non-Sasakian) contact nilmanifolds that satisfy the Hard Lefschetz property (see \cite{CaNiYu}).

The paper is structured as follows. In Sections \ref{minimal-models-nilmanifolds} and \ref{Sasakian-manifolds}, we review some definitions and results on minimal models, compact nilmanifolds and Sasakian manifolds. Finally, in Section \ref{proof}, we prove Theorem \ref{thm:main}.

\section{Minimal models of nilmanifolds}\label{minimal-models-nilmanifolds}
In this section we summarize some definitions and results about Sullivan models of manifolds.

A \emph{(real) commutative differential graded algebra } $\left( A,\d \right)$
(CDGA for short) is a graded algebra $A = \bigoplus_{k\ge 0} A_k$ over $\mathbb{R}$ such that for all $x\in A_k$
and $y\in A_l$ we have
\[
x y = \left( -1 \right)^{kl} yx,
\]
together with a differential $\d$ of degree one, such that $\d(xy)=\d(x)y+(-1)^k x \d(y)$ and $\d^2 =0$.
An example of commutative differential graded algebra is given by the de Rham
complex $\left( \Omega^*\left( M \right), \d \right)$ of differential forms on a
smooth manifold $M$, with the multiplication given by the wedge product.

A morphism of CDGAs is a morphism of algebras which preserves the degree and commutes with the differentials.
For every CDGA $\left( A,\d \right)$ the cohomology algebra $H^*\left( A\right)$ can be considered as a CDGA with the zero differential.
Let $f:\left( A,\d \right) \longrightarrow \left( B ,\d \right)$ be a morphism of CDGAs. For every $k\geq 0$, the map $f$ induces a
morphism between the  $k$-th cohomologies
$$
H^k\left( f \right)\colon H^k\left( A \right) \to H^k\left( B \right).
$$
If all the morphisms $H^k\left( f \right)$ are isomorphisms then $f$ is called a \emph{quasi-isomorphism} of CDGAs.

A CDGA  $\left( A,\d \right)$ is said to be \emph{directly quasi-isomorphic} to  a CDGA $\left( B,\d \right)$ if there is a
quasi-isomorphism of CDGAs $f:\left( A,\d \right) \longrightarrow \left( B ,\d \right)$.
Two CDGAs $\left( A,\d \right)$ and $\left( B,\d \right)$ are \emph{quasi-isomorphic} if there is a chain of CDGAs $A=C_0$, $C_1$, \dots, $C_r = B$, such that either $C_j$ is directly
quasi-isomorphic to $C_{j+1}$ or $C_{j+1}$ is directly quasi-isomorphic to  $C_j$.

We say that  a  CDGAs $\left( A,\d \right)$ is connected if $H^{0}(A)=\mathbb{R}$. The reader can find the
definition of the minimal (Sullivan) algebra in \cite{felixopreatanre}. We will use the following facts on them. In every quasi-isomorphism class of connected CDGAs there is a unique (up to isomorphism) minimal algebra
$\left( \underline{A},\d \right)$. Moreover, for every CDGA in the class, there exists a quasi-isomorphism of CDGAs
$$f:\left(  \underline{A},\d \right) \longrightarrow \left( A ,\d \right).$$
The minimal algebra in the class of CDGAs quasi-isomorphic   to the given connected CDGA $\left( A ,\d \right)$ is called the  \emph{minimal model} of $\left( A ,\d \right)$.

We say that a CDGA $\left( A,\d \right)$ is a  \emph{model} for a manifold $M$ if $\left( A,\d \right)$ is quasi-isomorphic to $\left( \Omega^*\left( M \right),\d  \right)$. The minimal model of $\left( \Omega^*\left( M \right),\d  \right)$ will be also called the  \emph{minimal model} of $M$.

A \emph{nilmanifold} is a compact homogeneous space of a nilpotent Lie group.
 Malcev \cite{malcev} proved that any nilmanifold can be written as $G/\Gamma$, where $G$ is a simply-connected nilpotent Lie group and $\Gamma$ is a co-compact discrete subgroup.

We recall the following theorem of Hasegawa.
\begin{theorem}[\cite{hasegawa}] \label{thm:hasegawa}
The minimal model for a compact nilmanifold $G/\Gamma$ is given by
$(\wedge^*\g^*, \d)$, where $\g^*$ is the dual space of the Lie algebra $\g$ of the Lie group $G$ and $\d$ is the Chevalley-Eilenberg differential.
\end{theorem}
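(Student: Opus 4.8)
The plan is to deduce Theorem~\ref{thm:hasegawa} from a classical theorem of Nomizu together with the uniqueness of the minimal model recalled in this section. First I would recall Nomizu's theorem: for a compact nilmanifold $G/\Gamma$ the natural inclusion into $\left(\Omega^*(G/\Gamma),\d\right)$ of the subcomplex generated by the invariant $1$-forms on $G$ induces an isomorphism in cohomology. Under the usual identification this subcomplex is precisely $\left(\wedge^*\g^*,\d\right)$ with $\d$ the Chevalley--Eilenberg differential, so this inclusion is a quasi-isomorphism of CDGAs; in particular $\left(\wedge^*\g^*,\d\right)$ is a model for $G/\Gamma$ in the sense of this section.

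Next I would verify that $\left(\wedge^*\g^*,\d\right)$ is a connected minimal Sullivan algebra. Connectedness is immediate: the differential vanishes on constants, so $H^0\left(\wedge^*\g^*,\d\right)=\R$. As an algebra, $\wedge^*\g^*$ is freely generated by $\g^*$ placed in degree $1$. Since the Chevalley--Eilenberg differential of $\alpha\in\g^*$ is given by $\d\alpha(X,Y)=-\alpha([X,Y])$, it takes values in $\wedge^2\g^*$, i.e.\ it is decomposable. The remaining point, and the one that really uses the hypothesis, is that the nilpotency of $\g$ provides an ordered basis $e^1,\dots,e^n$ of $\g^*$ for which $\d e^k$ lies in the subalgebra generated by the preceding generators; such a basis is obtained by dualizing a basis of $\g$ adapted to the descending central series $\g\supset[\g,\g]\supset[\,[\g,\g],\g\,]\supset\cdots$, using that the bracket of two elements lies strictly deeper in the series than either of them. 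An ordered basis with this property exhibits $\left(\wedge^*\g^*,\d\right)$ as an iterated Hirsch (elementary) extension, hence as a minimal Sullivan algebra.

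Finally I would conclude by the uniqueness statement quoted above: $\left(\wedge^*\g^*,\d\right)$ is a connected minimal algebra which, by Nomizu's theorem, is quasi-isomorphic to $\left(\Omega^*(G/\Gamma),\d\right)$, hence it is, up to isomorphism, the minimal model of $G/\Gamma$. The main obstacle in this scheme is Nomizu's theorem itself; if one wanted to include its proof, the standard route is an induction on $\dim\g$ in which $G/\Gamma$ is fibered over a lower-dimensional nilmanifold with torus fiber (using that a lattice in a simply-connected nilpotent group is compatible with the central series), and the Serre spectral sequence of this fibration is compared term by term with the Hochschild--Serre spectral sequence of the corresponding central extension of $\g$. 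Once that input is available, and once the nilpotency filtration of $\g^*$ has been fixed, checking minimality is routine linear algebra.
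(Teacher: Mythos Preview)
The paper does not prove this theorem; it is stated as a quotation from \cite{hasegawa} and used as input for the main argument, so there is no ``paper's own proof'' to compare against. Your outline is correct and is essentially Hasegawa's original argument: Nomizu's theorem supplies the quasi-isomorphism $\left(\wedge^*\g^*,\d\right)\hookrightarrow\left(\Omega^*(G/\Gamma),\d\right)$, and nilpotency of $\g$ yields an ordered basis of $\g^*$ making $\left(\wedge^*\g^*,\d\right)$ a minimal Sullivan algebra, whence uniqueness of minimal models finishes. One small point of care: when you say the ordered basis is obtained by ``dualizing a basis adapted to the descending central series,'' make sure the ordering is the right one---the elements annihilating $[\g,\g]$ should come \emph{first} in the dual basis, so that the closed generators appear before the non-closed ones, exactly as in \eqref{eq:dal}; otherwise the Hirsch-extension condition $\d e^k\in\wedge\langle e^1,\dots,e^{k-1}\rangle$ fails.
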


Suppose $\dim \g=2m+1$, with $m \geq 1$,  and let  $k-1\leq 2m+1$ be the dimension of  the first Chevalley-Eilenberg cohomology $H^1(\wedge^*\g^*)$ of $\g$. It is known that one can choose a basis $\{\alpha_1,\ldots,\alpha_{2m+1}\}$ of $\g^*$ such that $\alpha_1,\ldots,\alpha_{k-1}$ are closed, $\{[\alpha_1],\ldots,[\alpha_{k-1}]\}$ is a basis of $H^1(\wedge^*\g^*)$, and
\begin{equation}\label{eq:dal}
    d\alpha_l= -\sum_{i<j<l} \gamma_l^{ij} \alpha_i\wedge \alpha_j, \; \;  \mbox{for } 1\leq l\leq 2m+1, \mbox{ with } \gamma_l^{ij}=0, \mbox{ for } l<k.
\end{equation}

\section{Sasakian manifolds}\label{Sasakian-manifolds}
Let $M$ be a smooth manifold of dimension $2m+1$. A $1$-form $\eta$ on $M$ is called a
\emph{contact form} if $\eta\wedge (d\eta)^m$ nowhere vanishes.
Then the pair $(M,\eta)$ is called a (strict) \emph{contact manifold}.
We write $\Phi$ for $\frac12 d\eta$ and we denote by $\xi$ the  \emph{Reeb vector field},
that is the unique vector field on $M$ such that $i_{\xi}\eta=1$ and $i_{\xi}d\eta=0$.

Let $(M,\eta)$ be a contact manifold and $g$ a Riemannian metric on $M$.
 We
define the endomorphism $\phi\colon TM \to
TM$ by $ \Phi (X, Y ) = g( X, \phi Y)$.

Then $(M, \eta ,g)$ is called a \emph{Sasakian manifold} if the following conditions hold.
\begin{enumerate}[($i$)]
\item $\phi^2 =- \Id + \eta \otimes \xi$, where $\Id$ is the identity operator;
\item $g(\phi X , \phi Y )= g(X,Y) - \eta(X)\eta (Y)$ for any vector fields
	$X$ and $Y$ on $M$;
\item the normality condition is satisfied, namely
		\begin{equation*}
			 \left[ \phi, \phi \right]_{FN} +  2 d\eta \otimes
			\xi = 0,
		\end{equation*}
            where $[-,-]_{FN}$ is the Fr\"olicher-Nijenhuis
	    bracket.
\end{enumerate}

For a Sasakian manifold $(M, \eta, g)$, we will denote by $H^*_B(M)$ the basic cohomology of $M$ with respect to the foliation of dimension $1$ on $M$ which is generated by the Reeb vector field. It is clear that $d\eta$ is a basic $2$-form on $M$. Moreover, we will use the following result.
\begin{lemma}\label{em:detavol}
Let $M$ be a compact Sasakian manifold of dimension $2m+1$ with contact form $\eta$. Then $(\d\eta)^m = d\eta \wedge \ldots \wedge d\eta$ induces a non-zero element of the basic cohomology group $H^{2m}_B(M)$. Thus, $(d\eta)^{l}$ induces a non-zero element of $H^{2l}_{B}(M)$, for $1 \leq l \leq m$.
\end{lemma}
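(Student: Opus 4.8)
The plan is to establish the first claim by contradiction, using that $\eta\wedge(\d\eta)^m$ is a volume form together with Stokes' theorem, and then to derive the statement for $(\d\eta)^l$ with $1\le l\le m$ by a one-line factorization argument.

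As a preliminary step I would check that every power $(\d\eta)^l$ defines a class in $H^{2l}_B(M)$. Since $i_\xi\d\eta=0$ we have $i_\xi(\d\eta)^l=0$, and because $(\d\eta)^l$ is closed, Cartan's formula gives $\lie_\xi(\d\eta)^l = i_\xi\,\d(\d\eta)^l = 0$; hence $(\d\eta)^l$ is basic and closed, so it does represent a basic cohomology class, and there is something to prove.

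For the main point, suppose $(\d\eta)^m=\d\beta$ for some basic $(2m-1)$-form $\beta$. From $\d(\eta\wedge\beta)=\d\eta\wedge\beta-\eta\wedge\d\beta$ and Stokes' theorem on the compact oriented manifold $M$ (oriented by $\eta\wedge(\d\eta)^m$) one obtains $\int_M \eta\wedge(\d\eta)^m = \int_M \eta\wedge\d\beta = \int_M \d\eta\wedge\beta$. Now $\d\eta\wedge\beta$ is a product of basic forms, hence a \emph{basic} form of top degree $2m+1$; but any $(2m+1)$-form $\omega$ with $i_\xi\omega=0$ must vanish, since $\xi$ is nowhere zero and can be completed to a local frame. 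Thus $\int_M\eta\wedge(\d\eta)^m=0$, which is absurd because $\eta\wedge(\d\eta)^m$ is a nowhere vanishing top-degree form on $M$. Therefore $[(\d\eta)^m]\neq 0$ in $H^{2m}_B(M)$.

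Finally, for $1\le l\le m$, if $(\d\eta)^l=\d\gamma$ with $\gamma$ basic, then, using that $\d\eta$ is closed, $(\d\eta)^m=\d\gamma\wedge(\d\eta)^{m-l}=\d\bigl(\gamma\wedge(\d\eta)^{m-l}\bigr)$ would be basic-exact, contradicting the previous paragraph; hence $[(\d\eta)^l]\neq 0$ in $H^{2l}_B(M)$. I do not expect a genuine obstacle here: the only substantive observation is that basic forms of maximal degree vanish, and the points requiring a little care are matching the orientation conventions so that $\int_M\eta\wedge(\d\eta)^m$ is genuinely nonzero, and recalling that the basic differential is simply the restriction of $\d$ to basic forms.
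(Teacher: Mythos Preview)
Your proof is correct and follows essentially the same route as the paper: assume $(\d\eta)^m$ is basic-exact, observe that the resulting basic $(2m+1)$-form $\d\eta\wedge\beta$ must vanish since $i_\xi$ annihilates it, and use Stokes' theorem to contradict $\int_M\eta\wedge(\d\eta)^m\neq 0$. You even spell out explicitly the basicness of $(\d\eta)^l$ and the factorization argument for intermediate $l$, which the paper leaves to the reader.
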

\begin{proof}
Since  $\d \eta$ is a basic form the same is true for all powers $(\d \eta)^{l}$. To prove the lemma it is enough to show that $[(\d\eta)^m]_{B}\neq 0$. Suppose that there exists a basic $(2m-1)$-form $\Omega$ on $M$ such that
$$(\d\eta)^m=\d\Omega.$$
Then
\begin{equation*}
     -\d(\eta\wedge\Omega)=-\d\eta\wedge\Omega+\eta\wedge\d\Omega=\eta\wedge(\d\eta)^m-\d\eta\wedge\Omega.
\end{equation*}
Now, $\d\eta\wedge\Omega$ is a $(2m+1)$-form on $M$ and
\begin{equation*}
     \ixi (\d\eta\wedge\Omega)=0.
\end{equation*}
This implies that $\d\eta\wedge\Omega=0$ and
\begin{equation*}
     \eta\wedge(\d\eta)^m=-\d(\eta\wedge\Omega).
\end{equation*}
Therefore, using Stoke's theorem
\begin{equation*}
     \int_M \eta\wedge(\d\eta)^m=-\int_M \d(\eta\wedge\Omega)=0,
\end{equation*}
which is a contradiction.
\end{proof}

For further details on Sasakian manifolds we refer the reader to \cite{blair2010} or \cite{boyergalicki2008}.

\section{Proof of Theorem \ref{thm:main}}\label{proof}

Let $H(1, m)$ be the generalized Heisenberg group of dimension $2m+1$. It is well known that $H(1, m)$ admits a left-invariant Sasakian structure $(\phi, \xi, \eta, g)$ (see, for instance, \cite{ChGo}). So, if $\Gamma$ is a co-compact discrete subgroup then $(\phi, \xi, \eta, g)$ induces a Sasakian structure on the compact nilmanifold $H(1, m)/\Gamma$.

Conversely, let $(M,\eta)$ be a contact manifold that admits a compatible Sasakian metric. Tievsky \cite{tievsky} proved that $M$ has the model
\begin{equation}\label{eq:tdga}
    (\mathcal{T}^*(M),\td),
\end{equation}
where
\begin{equation*}
    \mathcal{T}^*(M):=H_B^*(M)\otimes\R[y]/(y^2)
\end{equation*}
and we set $\deg(y)=1$.
The component of degree $p$ of Tievsky CDGA is given by
\begin{equation*}
    (H_B^*(M)\otimes\R[y]/(y^2))_p\cong H_B^p(M)\oplus H_B^{p-1}(M)y.
\end{equation*}
The differential $\td$ is defined by
\begin{equation}\label{eq:tievskyd}
    \td([\alpha]_B+[\beta]_B y):=[\beta\wedge \d \eta]_B\in H_B^{p+1}(M),
\end{equation}
where $\alpha$ is a basic closed $p$-form and $\beta$ is a basic closed $(p-1)$-form.

Now, let $(N,\eta)$ be a compact Sasakian nilmanifold of dimension $n=2m+1$, i.e. $N=G/\Gamma$ is a compact nilmanifold with a contact structure $\eta$ which admits a compatible Sasakian metric.

We will show that the Lie algebra $\g$ of $G$ is isomorphic to the Heisenberg Lie algebra ${\frak h}(1, m)$.

We have two models for $N$: the Tievsky model \eqref{eq:tdga} and the minimal model stated in Theorem \ref{thm:hasegawa}. Therefore there exists a quasi-isomorphism of CDGAs
\begin{equation}\label{eq:morphism}
f:(\wedge^*\g^*, \d)\longrightarrow (H_B^*(N)\otimes\R[y]/(y^2),\td).
\end{equation}

Note that
\begin{equation*}
    (H_B^*(N)\otimes\R[y]/(y^2))_1= H_B^1(N)\oplus \R y.
\end{equation*}
Define $k=\dim H^1(\wedge^*\g^*)+1$.
Let us choose a basis $\{\alpha_1,\ldots,\alpha_{2m+1}\}$ of $\g^*$ such that $\{[\alpha_1],\ldots,[\alpha_{k-1}]\}$ is a basis of $H^1(\wedge^*\g^*)$ and (\ref{eq:dal}) holds. We have for every $1\leq i \leq 2m+1$
\begin{equation}\label{eq:fai}
    f(\alpha_i)=\beta_i+ a_i y,
\end{equation}
for some $\beta_i\in H_B^1(N)$ and $a_i\in\R$.

From \eqref{eq:dal}, for every $1\leq i \leq 2m+1$ we get
\begin{equation}\label{eq:fdai}
   f(\d\alpha_i)=- \sum_{r<s<i} \gamma_i^{rs} f(\alpha_r)\tprod  f(\alpha_s),
\end{equation}
where we denoted by $\tprod $ the product in the Tievsky CDGA \eqref{eq:tdga}.
On the other hand, due to the definition \eqref{eq:tievskyd} of the Tievsky differential  we obtain from \eqref{eq:fai}
\begin{equation}\label{eq:dfai}
\td f(\alpha_i) = a_i[\d\eta]_B.
\end{equation}
Since $f$ is a morphism of CDGAs, we have $f(\d\alpha_i)=\td f(\alpha_i)$. Hence, from \eqref{eq:fdai} and \eqref{eq:dfai} it follows that
\begin{equation}\label{eq:detagamma}
   - \sum_{r<s<i} \gamma_i^{rs} f(\alpha_r)\tprod  f(\alpha_s)=f(\d\alpha_i)= a_i[\d\eta]_B  \qquad \mbox{for }1\leq i \leq 2m+1.
\end{equation}

Since, for $1\leq i \leq k-1$, we have $\d\alpha_i=0$, we get $a_i[\d\eta]_B=0$.
By Lemma \ref{em:detavol} $[\d\eta]_{B}\neq 0$, which immediately implies that $a_i=0$ for every $1\leq i \leq k-1$.
Therefore  we have
\begin{equation}\label{eq:fai2}
    f(\alpha_i)=\beta_i \qquad \mbox{for } 1\leq i \leq k-1.
\end{equation}

\begin{lemma}\label{lem:betas}
The set $\{\beta_1=f(\alpha_1),\ldots,\beta_{k-1}=f(\alpha_{k-1})\}$ is a basis of $H_B^1(N)$.
\end{lemma}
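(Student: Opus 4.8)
The plan is to show that $\{\beta_1,\ldots,\beta_{k-1}\}$ is linearly independent in $H_B^1(N)$ and that it spans. Linear independence is the easier half: because $f$ is a quasi-isomorphism, $H^1(f)\colon H^1(\wedge^*\g^*)\to H^1(\mathcal{T}^*(N))$ is an isomorphism. By construction $\{[\alpha_1]_{\g},\ldots,[\alpha_{k-1}]_{\g}\}$ is a basis of $H^1(\wedge^*\g^*)$, so $\{H^1(f)[\alpha_i]_{\g}\}_{i=1}^{k-1}$ is a basis of $H^1(\mathcal{T}^*(N))$. Since by \eqref{eq:fai2} we have $f(\alpha_i)=\beta_i$ (with $\beta_i\in H_B^1(N)$, hence $\td$-closed, since $\td$ on the degree-$1$ piece $H_B^1(N)\oplus\R y$ kills $H_B^1(N)$ by \eqref{eq:tievskyd}), the class of $\beta_i$ in $H^1(\mathcal{T}^*(N))$ equals $H^1(f)[\alpha_i]_{\g}$. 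Thus the classes $[\beta_i]$ are linearly independent in $H^1(\mathcal{T}^*(N))$, and \emph{a fortiori} the $\beta_i$ are linearly independent in $H_B^1(N)$ itself.

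For the spanning part I would first identify $H^1(\mathcal{T}^*(N))$ explicitly using the differential \eqref{eq:tievskyd}. A degree-$1$ element $[\alpha]_B+[\beta]_B y$ with $\alpha,\beta$ basic closed has $\td([\alpha]_B+[\beta]_By)=[\beta\wedge\d\eta]_B$, which by Lemma \ref{em:detavol} (the case $l=1$, so $[\d\eta]_B\neq0$, together with the fact that $H_B^0(N)=\R$ since $N$ is connected) vanishes if and only if $\beta$ is exact, i.e. $[\beta]_B=0$. Hence the $\td$-cocycles in degree $1$ are exactly $H_B^1(N)\oplus 0$, and there are no coboundaries in degree $1$ (degree $0$ is $H_B^0(N)=\R$, on which $\td$ vanishes). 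Therefore $H^1(\mathcal{T}^*(N))\cong H_B^1(N)$ canonically. Combining this with the previous paragraph: $\{[\beta_i]\}_{i=1}^{k-1}$ is a basis of $H^1(\mathcal{T}^*(N))\cong H_B^1(N)$, and under this identification $[\beta_i]\leftrightarrow\beta_i$, so $\{\beta_1,\ldots,\beta_{k-1}\}$ is a basis of $H_B^1(N)$.

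The only subtlety — the ``main obstacle'' — is making the identification $H^1(\mathcal{T}^*(N))\cong H_B^1(N)$ airtight, i.e. checking carefully that in degree $1$ there are no nonzero coboundaries and that the kernel of $\td$ is precisely $H_B^1(N)$; both reduce to the non-vanishing of $[\d\eta]_B$ and to $H_B^0(N)=\R$, so once Lemma \ref{em:detavol} and connectedness of $N$ are invoked the argument is immediate. Everything else is formal: quasi-isomorphisms induce isomorphisms on all cohomology groups, in particular on $H^1$, and a basis maps to a basis.
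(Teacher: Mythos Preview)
Your proof is correct and follows essentially the same approach as the paper: identify $H^{1}(\mathcal{T}^*(N))$ with $H^{1}_B(N)$ via $\td_0=0$ and Lemma~\ref{em:detavol}, then transport the basis of $H^{1}(\wedge^*\g^*)$ through the isomorphism $H^{1}(f)$. The paper merely orders the steps differently (establishing the identification first and not separating independence from spanning), but the content is identical.
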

\begin{proof}
As first step we prove that
\begin{equation*}
    H^{1}(\mathcal{T}^*(N))=H^{1}_B(N).
\end{equation*}
Indeed, recall that $N=G/\Gamma$ and
\begin{equation*}
    \td_0:\mathcal{T}^0(N)=H^{0}(N)\longrightarrow \mathcal{T}^1(N)
\end{equation*}
is identically zero by \eqref{eq:tievskyd}. Therefore, from Lemma \ref{em:detavol}, we deduce that
\begin{align*}
    H^{1}(\mathcal{T}^*(N))=\ker \td_1= \{\beta + ay \,|\, \beta\in H^{1}_B(N) ,\, a\in\R,\,\td(\beta + ay)=0\}=H^{1}_B(N).
\end{align*}
Now, note that
\begin{equation*}
    H^1(f): H^1(\wedge^*\g^*) \longrightarrow H^{1}(\mathcal{T}^*(N))
\end{equation*}
is an isomorphism by assumption.
Since $\{\alpha_1=[\alpha_1],\ldots,\alpha_{k-1}=[\alpha_{k-1}]\}$ is a basis of $H^1(\wedge^*\g^*)$, we get that $\{\beta_1=f(\alpha_1),\ldots,\beta_{k-1}=f(\alpha_{k-1})\}$ is a basis of $H_B^1(N)$.
\end{proof}

Thus for $i\geq k$ we have
\begin{equation} \label{eq:betal}
     \beta_i= \sum_{r=1}^{k-1} s_{ir} \beta_r \qquad \mbox{for some } s_{ir}\in\R.
\end{equation}
Define
\begin{align*}
     \tilde{\alpha}_i&= \alpha_i \qquad &\mbox{for } 1\leq i \leq k-1,\\
     \tilde{\alpha}_i&= \alpha_i-\sum_{r=1}^{k-1} s_{ir} \alpha_r=\alpha_i-\sum_{r=1}^{k-1} s_{ir} \tilde{\alpha}_r  \qquad &\mbox{for } i \geq k.
\end{align*}
Then $\{ \tilde{\alpha}_1,\ldots,\tilde{\alpha}_{k-1},\ldots,\tilde{\alpha}_{2m+1}\}$ is a new basis of $\g^*$ such that
$\{ [\tilde{\alpha}_1],\ldots,[\tilde{\alpha}_{k-1}]\}$ is a basis of $H^1(\wedge^*\g^*)$, and for $l\geq k$ we have
\begin{align*}
    \d\tilde{\alpha}_l&=d\alpha_l -\sum_{r=1}^{k-1} s_{lr} \d\alpha_r= -\sum_{r<s<l} \gamma_l^{rs} \alpha_r\wedge \alpha_s\\
                      &=-\sum_{r<s<l} \gamma_l^{rs} \left(\tilde{\alpha}_r +\sum_{i=1}^{k-1} s_{ri} \tilde{\alpha}_i\right)\wedge \left(\tilde{\alpha}_s +\sum_{i=1}^{k-1} s_{si} \tilde{\alpha}_i \right)\\
                      &=-\sum_{r<s<l} \tilde{\gamma}_l^{rs} \tilde{\alpha}_r\wedge \tilde{\alpha}_s,
\end{align*}
for some new real numbers $\tilde{\gamma}_l^{rs}$.
Moreover, we get
\begin{equation}\label{eq:fatildel}
     f(\tilde{\alpha}_i)=a_i y \qquad \mbox{for every } i\geq k.
\end{equation}
Indeed, due to \eqref{eq:fai}, \eqref{eq:fai2} and \eqref{eq:betal}, we have
\begin{align*}
     f(\tilde{\alpha}_i)= f(\alpha_i)-\sum_{r=1}^{k-1} s_{ir} f(\alpha_r)
                        =\beta_i+a_iy-\sum_{r=1}^{k-1} s_{ir} \beta_r=a_i y.
\end{align*}

Now, we will prove that $a_i=0$ for every $i\leq 2m$.  Suppose this is not true. Then there is $l\leq 2m$ such that $a_l\neq 0$. It follows from Lemma \ref{em:detavol} that
\begin{equation*}
     (a_l[\d\eta]_B)^m=a_l^m[(\d\eta)^m]_B\neq 0.
\end{equation*}
But from \eqref{eq:detagamma} we get that
\begin{equation}\label{eq:fdal}
    (a_l[\d\eta]_B)^m= \left(-\sum_{r<s<l} \gamma_l^{rs} f(\alpha_r)\tprod  f(\alpha_s)\right)^m=0,
\end{equation}
since $l\leq 2m$ and thus in every product
\begin{equation*}
f(\alpha_{i_1})\tprod f(\alpha_{j_1}) \tprod \ldots \tprod  f(\alpha_{i_m})\tprod  f(\alpha_{j_m})=f(\alpha_{i_1}\wedge \alpha_{j_1}\wedge\dots\wedge\alpha_{i_m}\wedge \alpha_{j_m})
\end{equation*}
with $i_1<j_1<l,\ldots, i_m<j_m<l$ at least one index appears twice. Thus we have a contradiction. Therefore
\begin{equation}\label{eq:ai0}
a_i=0 \qquad \mbox{for every }i\leq 2m.
\end{equation}
Now, we will prove that $k=2m+1$.
Since $k=\dim H^{1}(\wedge^*\g^*)+1$,  we have $k\leq 2m+2$. By Nomizu theorem,
$H^{1}(\wedge^*\g^*)\cong H^{1}(N)$. Therefore $k-1=\dim H^{1}(N)=b_{1}(N)$. Since $N$ is Sasakian, $b_{1}(N)$ is even. Hence, we cannot have $k=2m+2$, and thus $k\leq 2m+1$.

Now, suppose that $k\leq 2m$. Then, from \eqref{eq:fatildel} and \eqref{eq:ai0} we get
\begin{equation}\label{eq:fatildei0}
     f(\tilde{\alpha}_i)=0 \qquad \mbox{for every } k\leq i\leq 2m.
\end{equation}
Therefore
\begin{equation}\label{eq:fatilde2m+1}
     f(\tilde{\alpha}_1 \wedge\ldots\wedge\tilde{\alpha}_{2m+1})=0.
\end{equation}
On the other hand, from \cite[Lemma 1]{hasegawa}
it follows that $\tilde{\alpha}_1 \wedge\ldots\wedge\tilde{\alpha}_{2m+1}$ is a generator of
$H^{2m+1}(\wedge^*\g^*)\cong \R$. Thus the cohomology class of $f(\tilde{\alpha}_i \wedge\ldots\wedge\tilde{\alpha}_{2m+1})$ should be a generator of $H^{2m+1}(\mathcal{T}^*(N))$, which contradicts to \eqref{eq:fatilde2m+1}. Therefore $k=2m+1$. This implies that
\[
f(\tilde{\alpha}_{2m+1})=a_{2m+1}y \quad \mbox{and }\quad f(\tilde{\alpha}_{j})=\beta_j,\quad  \mbox{for }\ j\leq 2m.
\]
As the cohomology class of
\[
a_{2m+1}\beta_1\tprod \ldots\tprod \beta_{2m}y=f(\tilde{\alpha}_1 \wedge\ldots\wedge\tilde{\alpha}_{2m+1})
\]
generates  $H^{2m+1}(\mathcal{T}^*(N))$, we conclude that $a_{2m+1}\neq 0$.

Thus, using Lemma \ref{em:detavol} and the fact that
$$f(\tilde{\alpha}_{2m+1} \wedge (\d \tilde{\alpha}_{2m+1})^{m}) = a_{2m+1}^{m+1} y \tprod [(d\eta)^m]_B,$$
we deduce that
$f(\tilde{\alpha}_{2m+1} \wedge (\d \tilde{\alpha}_{2m+1})^{m}) \neq 0$, which implies
\[
\tilde{\alpha}_{2m+1} \wedge (\d \tilde{\alpha}_{2m+1})^{m} \neq 0.
\]
In other words, $\tilde{\alpha}_{2m+1}\in {\frak g}^*$ is an algebraic contact structure on ${\frak g}$. Since
\begin{equation}\label{all-closed}
\d \tilde{\alpha}_i = 0, \; \; \; \mbox{ for } 1\leq i \leq 2m
\end{equation}
and $\d\tilde{\alpha}_{2m+1}$ is a linear combination of $\tilde{\alpha}_{i}\wedge \tilde{\alpha}_{j}$  with $i,j\leq 2m$ the Lie algebra ${\frak g}$ is $2$-step nilpotent. By Proposition 19 in \cite{GoRe}, we conclude
that ${\frak g}$ is the Heisenberg algebra ${\frak h}(1,m)$.

\end{document}